\newtheorem{theorem}{Theorem}[section]
\newtheorem{lemma}[theorem]{Lemma}
\newtheorem{example}[theorem]{Example}
\newtheorem{corollary}[theorem]{Corollary}
\newtheorem{claim}[theorem]{Claim}
\theoremstyle{definition}
\newtheorem{definition}[theorem]{Definition}
\theoremstyle{remark}
\newtheorem{remark}[theorem]{Remark}
\newcommand\remove[1]{}
\def\f2{\mathbb{F}_2}
\newcommand{\1}{\mathbf{1}}
\newcommand{\diam}{{\rm diam}\hskip0.02cm}
\begin{document}

\title{\LARGE Nonexistence of embeddings with uniformly bounded
distortions of Laakso graphs into diamond graphs}

\author{Sofiya~Ostrovska\\
\\
Department of Mathematics\\ Atilim University \\ 06836 Incek,
Ankara, TURKEY \\
\textit{E-mail address}:
\texttt{sofia.ostrovska@atilim.edu.tr}\\
\\
and \\
\\
Mikhail~I.~Ostrovskii\\
\\
Department of Mathematics and Computer Science\\
St. John's University\\
8000 Utopia Parkway\\
Queens, NY 11439, USA \\
\textit{E-mail address}: \texttt{ostrovsm@stjohns.edu}\\
Phone: 718-990-2469\\
Fax: 718-990-1650}

\date{~}
\maketitle

\begin{abstract}

Diamond graphs and Laakso graphs are important examples in the
theory of metric embeddings. Many results for these families of
graphs are similar to each other. In this connection, it is
natural to ask whether one of these families admits uniformly
bilipschitz embeddings into the other. The well-known fact that
Laakso graphs are uniformly doubling but diamond graphs are not,
immediately implies that diamond graphs do not admit uniformly
bilipschitz embeddings into Laakso graphs. The main goal of this
paper is to prove that Laakso graphs do not admit uniformly
bilipschitz embeddings into diamond graphs.

\end{abstract}

{\small \noindent{\bf Keywords.} diamond graphs, doubling metric
space, Laakso space, Lipschitz map}\medskip

{\small \noindent{\bf 2010 Mathematics Subject Classification.}
Primary: 46B85; Secondary: 05C12, 30L05.}

\begin{large}

\section{Introduction}

Diamond graphs and Laakso graphs are important examples in the
theory of metric embeddings, see \cite{BC05,Laa00,LP01,LN04,
MN13,NR03,Ost11,Ost13,Ost14a} . Many results for these families of
graphs are similar to each other, see the example after
Definitions \ref{D:Diamonds} and \ref{D:Laakso} and in Section
\ref{S:Conseq}. In this connection, the question emerges: does one
of these families admit uniformly bilipschitz embeddings into the
other. The well-known fact that Laakso graphs are uniformly
doubling but diamond graphs are not uniformly doubling - see
Definition \ref{D:doubling} and the subsequent discussion  -
immediately implies that diamond graphs do not admit uniformly
bilipschitz embeddings into Laakso graphs. The main goal of this
paper is to prove that Laakso graphs do not admit uniformly
bilipschitz embeddings into diamond graphs.

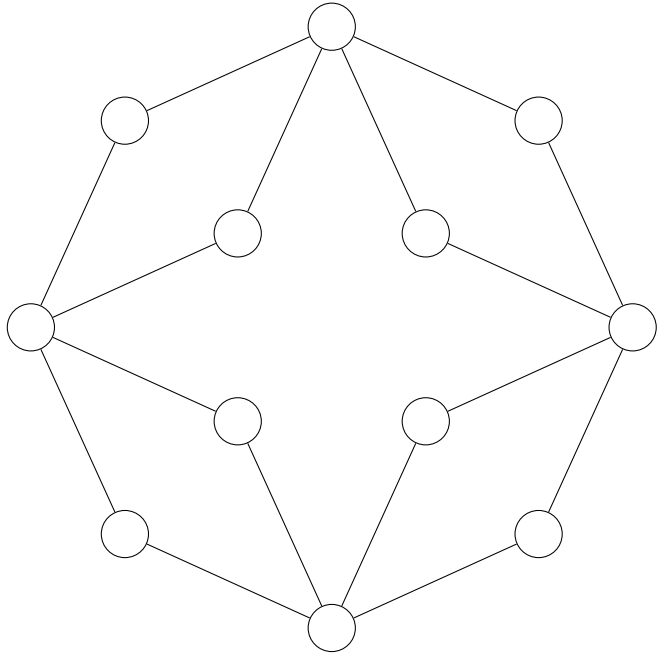
\begin{figure}
\begin{center}
{
\begin{tikzpicture}
  [scale=.25,auto=left,every node/.style={circle,draw}]
  \node (n1) at (16,0) {\hbox{~~~}};
  \node (n2) at (5,5)  {\hbox{~~~}};
  \node (n3) at (11,11)  {\hbox{~~~}};
  \node (n4) at (0,16) {\hbox{~~~}};
  \node (n5) at (5,27)  {\hbox{~~~}};
  \node (n6) at (11,21)  {\hbox{~~~}};
  \node (n7) at (16,32) {\hbox{~~~}};
  \node (n8) at (21,21)  {\hbox{~~~}};
  \node (n9) at (27,27)  {\hbox{~~~}};
  \node (n10) at (32,16) {\hbox{~~~}};
  \node (n11) at (21,11)  {\hbox{~~~}};
  \node (n12) at (27,5)  {\hbox{~~~}};

  \foreach \from/\to in {n1/n2,n1/n3,n2/n4,n3/n4,n4/n5,n4/n6,n6/n7,n5/n7,n7/n8,n7/n9,n8/n10,n9/n10,n10/n11,n10/n12,n11/n1,n12/n1}
    \draw (\from) -- (\to);

\end{tikzpicture}
} \caption{Diamond $D_2$.}\label{F:Diamond2}
\end{center}
\end{figure}

To the best of our knowledge, the first paper in which diamond
graphs $\{D_n\}_{n=0}^\infty$ were used in Metric Geometry is the
conference version of \cite{GNRS04}, which was published in 1999.

\begin{definition}\label{D:Diamonds}
Diamond graphs $\{D_n\}_{n=0}^\infty$ are defined recursively: The
{\it diamond graph} of level $0$ has two vertices joined by an
edge of length $1$ and is denoted by $D_0$. The {\it diamond
graph} $D_n$ is obtained from $D_{n-1}$ in the following way.
Given an edge $uv\in E(D_{n-1})$, it is replaced by a
quadrilateral $u, a, v, b$, with edges $ua$, $av$, $vb$, $bu$.
(See Figure \ref{F:Diamond2}.)
\medskip

Two different normalizations of the graphs $\{D_n\}_{n=1}^\infty$
can be found in the literature:

\begin{itemize}

\item {\it Unweighted diamonds:} Each edge has length $1$.

\item {\it Weighted diamonds:} Each edge of $D_n$ has length
$2^{-n}$.
\end{itemize}

In both cases, we endow the vertex sets of $\{D_n\}_{n=0}^\infty$
with their shortest path metrics.

For weighted diamonds,  the identity map $D_{n-1}\mapsto D_n$ is
an isometry and, in this case, the union of $D_n$  endowed with
the metric induced from $\{D_n\}_{n=0}^\infty$ is called the {\it
infinite diamond} and denoted by $D_\omega$.
\end{definition}

Another family of graphs considered in the present article is that
of Laakso graphs. The Laakso graphs were introduced in
\cite{LP01}, but they were inspired by the construction of Laakso
\cite{Laa00}.

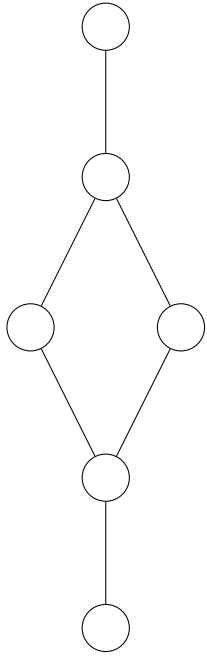
\begin{figure}
\begin{center}
{
\begin{tikzpicture}
  [scale=.25,auto=left,every node/.style={circle,draw}]
  \node (n1) at (16,0) {\hbox{~~~}};
  \node (n2) at (16,8)  {\hbox{~~~}};
  \node (n3) at (12,16) {\hbox{~~~}};
  \node (n4) at (20,16)  {\hbox{~~~}};
  \node (n5) at (16,24)  {\hbox{~~~}};
  \node (n6) at (16,32) {\hbox{~~~}};

\foreach \from/\to in {n1/n2,n2/n3,n2/n4,n4/n5,n3/n5,n5/n6}
    \draw (\from) -- (\to);

\end{tikzpicture}
} \caption{Laakso graph $L_1$.}\label{F:Laakso}
\end{center}
\end{figure}

\begin{definition}\label{D:Laakso}
Laakso graphs $\{L_n\}_{n=0}^\infty$ are defined recursively: The
{\it Laakso graph} of level $0$ has two vertices joined by an edge
of length $1$ and is denoted $L_0$. The {\it Laakso graph} $L_n$
is obtained from $L_{n-1}$ according to the following procedure.
Each edge $uv\in E(L_{n-1})$ is replaced by the graph $L_1$
exhibited in Figure \ref{F:Laakso}, the vertices $u$ and $v$ are
identified with the vertices of degree $1$ of $L_1$.
\medskip

Similarly to the case of diamond graphs, the two different
normalizations of the graphs $\{L_n\}_{n=1}^\infty$ are used:

\begin{itemize}

\item {\it Unweighted Laakso graphs:} Each edge has length $1$.

\item {\it Weighted Laakso graphs:} Each edge of $L_n$ has length
$4^{-n}$.
\end{itemize}

In both situations,  we endow vertex sets of
$\{L_n\}_{n=0}^\infty$ with their shortest path metrics. In the
case of weighted Laakso graphs, the identity map $L_{n-1}\mapsto
L_n$ is an isometry and the union of $L_n$, endowed with the
metric induced from $\{L_n\}_{n=0}^\infty$, is called the {\it
Laakso space} and denoted by $L_\omega$.
\end{definition}

Many known results for one of the aforementioned  families admit
analogues for the other. For example, it is known \cite{JS09} that
both of the families can be used to characterize superreflexivity.
Further, the two families consist of planar graphs with poor
embeddability into Hilbert space, see \cite{Laa00,LP01,NR03}.
Moreover, in many situations, the proofs used for one of the
families can be easily adjusted to work for the other family. For
instance, this is the case for the Markov convexity (see
\cite[Section 3]{MN13} and Section \ref{S:Conseq} below). There
are similarities between results for the infinite diamond and the
Laakso space, too. For example, neither of spaces $D_\omega$ and
$L_\omega$ admits bilipschitz embeddings into any Banach space
with the Radon-Nikod\'ym property \cite{CK09,Ost11}.

On the other hand, the families $\{D_n\}$ and $\{L_n\}$ are not
alike in some important metrical respects, and the corresponding
properties of the Laakso graphs were among the reasons for their
introduction. To exemplify the differences, it can be mentioned
that the Laakso graphs are uniformly doubling (see \cite[Theorem
2.3]{LP01}) and unweighted Laakso graphs have uniformly bounded
geometry, whereas diamond graphs do not possess any of these
properties. These facts are well known. Nevertheless, for the
convenience of the readers, they will be proved after recalling
the necessary definitions as a suitable reference is not
available.

\begin{definition}\label{D:doubling} {\rm (i)} A metric space $X$ is called {\it doubling} if there is a constant $D\ge 1$ such that every bounded set $B$ in $X$ can be
covered by at most
$D$ sets of diameter $\diam(B)/2$. A sequence of metric spaces is
called {\it uniformly doubling} if all of them are doubling and
the constant $D$ can be chosen to be the same for all of them.

{\rm (ii)} A metric space $X$ is said to have {\it bounded
geometry} if there is a function $M:(0,\infty)\to(0,\infty)$ such
that each ball of radius $r$ in $X$ has at most $M(r)$ elements. A
sequence of metric spaces is said to have {\it uniformly bounded
geometry} if the function $M(r)$ can be chosen to be the same for
all of them.
\end{definition}

It is easy to verify that a sequence of unweighted graphs has
uniformly bounded geometry if and only if they have uniformly
bounded degrees. This observation implies immediately that
unweighted $\{L_n\}$ has uniformly bounded geometry, but $\{D_n\}$
does not.\medskip

The fact that $\{D_n\}$ is not uniformly doubling - it is easy to
see that this statement does not depend on normalization - can be
shown in the following way. Consider unweighted diamonds and call
one of the vertices of $D_0$ the {\it top} and the other the {\it
bottom}. Define the {\it top} and the {\it bottom} of $D_n$ as
vertices which evolved from the top and the bottom of $D_0$,
respectively. Now, consider the ball of radius $1$ centered at the
bottom of $D_n$. It contains $2^n+1$ elements (including the
bottom) and has diameter $2$. On the other hand, it can be readily
seen that any subset of this ball of diameter $1$ contains two
elements. The fact that $\{D_n\}$ is not uniformly doubling
follows. Many interesting results on doubling metric spaces can be
found in \cite{Hei01}.
\medskip

Our goal is to show that the diamond and Laakso graphs are not
alike in one more respect in terms of the next standard
definition:

\begin{definition}\label{D:LipBilip}
{\rm (i)} Let $0\le C<\infty$. A map $f: (A,d_A)\to (Y,d_Y)$
between two metric spaces is called $C$-{\it Lipschitz} if
\[\forall u,v\in A\quad d_Y(f(u),f(v))\le Cd_A(u,v).\] A map $f$
is called {\it Lipschitz} if it is $C$-Lipschitz for some $0\le
C<\infty$.\medskip

{\rm (ii)} Let $1\le C<\infty$. A map $f:A\to Y$ is called a {\it
$C$-bilipschitz embedding} if there exists $r>0$ such that
\begin{equation}\label{E:MapDist}\forall u,v\in A\quad rd_A(u,v)\le
d_Y(f(u),f(v))\le rCd_A(u,v).\end{equation} A {\it bilipschitz
embedding} is an embedding which is $C$-bilipschitz for some $1\le
C<\infty$. The smallest constant $C$ for which there exist $r>0$
such that \eqref{E:MapDist} is satisfied is called the {\it
distortion} of $f$.

{\rm (iii)} Let $\{M_n\}_{n=1}^\infty$ and $\{R_n\}_{n=1}^\infty$
be two sequences of metric spaces. We say that
$\{M_n\}_{n=1}^\infty$ admits {\it uniformly bilipschitz
embeddings} into $\{R_n\}_{n=1}^\infty$ if for each
$n\in\mathbb{N}$ there is $m(n)\in\mathbb{N}$ and a bilipschitz
map $f_n:M_n\to R_{m(n)}$ such that the distortions of $\{f_n\}$
are uniformly bounded.
\end{definition}

\begin{remark} It is not difficult to see that a sequence of metric spaces which admits uniformly bilipschitz embeddings into a sequence of uniformly
doubling spaces is itself uniformly doubling. Together with the
mentioned above fact that the graphs $\{L_n\}_{n=0}^\infty$ are
uniformly doubling while $\{D_n\}_{n=0}^\infty$ are not, this
observation implies that $\{D_n\}_{n=0}^\infty$ does not admit
uniformly bilipschitz embeddings into $\{L_n\}_{n=0}^\infty$.
\end{remark}

The  present  paper aims to prove the non-embeddability in the
opposite  direction. This has been achieved by the next theorem,
which constitutes the main result of this work.

\begin{theorem}\label{T:LaaksoIntoDmnds} The sequence of Laakso graphs does not admit uniformly bilipschitz
embeddings into the sequence of diamond graphs.
\end{theorem}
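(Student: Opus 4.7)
The approach I propose exploits a fundamental structural asymmetry between the two families. Every edge of a diamond graph lies in a $4$-cycle of small diameter, so $D_m$ contains no cut vertex---no single vertex, when removed, disconnects the graph. By contrast, each $L_1$-piece in the Laakso construction has two stem edges whose inner endpoints $p_2, p_5$ are cut vertices, and the recursive construction produces $\Theta(6^n)$ such cut vertices in $L_n$ distributed across all scales $4^{-k}$. The extremal vertices $p_1, p_6$ moreover have degree $1$ in $L_n$, a feature no diamond graph exhibits.

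The plan is to convert this topological-metric asymmetry into a quantitative obstruction. One natural invariant, for each stem triple $(u,v,w)$ in $L_n$ with $v$ cutting $u$ from $w$, measures how uniquely $v$ serves as a near-midpoint of the pair $(u,w)$ at the scale of the stem: in the Laakso graph this value is large for every stem (since no alternative geodesic exists around $v$), whereas in the diamond graph the abundance of short $4$-cycles around every edge tends to suppress it. A $C$-bilipschitz map $f:L_n \to D_m$ would transfer this invariant from each stem triple of $L_n$ to its image in $D_m$ with loss polynomial in $C$. Summing over the $\Theta(6^n)$ stems of $L_n$, suitably weighted by scale, should produce a lower bound on $C$ that grows with $n$, ruling out uniformity.

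The main technical obstacle, and the heart of the proof, is that the diamond graph is itself metrically rich: its fine-scale structure admits approximate cut configurations that can mimic a Laakso stem locally, so a purely counting argument is unlikely to close on its own. The cleanest route is therefore a self-similar self-improvement scheme exploiting the recursive structure of $L_n$: given a $C$-bilipschitz embedding of $L_n$ into $D_m$, locate a sub-embedding of $L_{n-1}$ into some sub-diamond $D_{m'} \subset D_m$ whose distortion has strictly improved by a factor depending only on $C$ and on the outer stem invariant. Iterating $n$ times would force the distortion below $1$, contradicting the bilipschitz assumption. Producing and verifying this improved sub-embedding---likely via an averaging argument over the many copies of $L_{n-1}$ sitting naturally inside $L_n$, combined with a pigeonhole step selecting the one whose image is well-aligned with a coherent sub-diamond---is the central technical step I anticipate.
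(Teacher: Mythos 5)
There is a genuine gap: the step you yourself identify as ``the central technical step'' --- producing, from a $C$-bilipschitz embedding of $L_n$ into $D_m$, a sub-embedding of $L_{n-1}$ into a subdiamond whose distortion has \emph{strictly improved} by a definite factor --- is asserted but not proved, and no mechanism for it is offered. Distortion is not a quantity that improves under restriction in any general way (restricting a map can only decrease distortion trivially, never by a controlled factor bounded away from $1$), so an averaging/pigeonhole argument over the copies of $L_{n-1}$ inside $L_n$ would need some monotone quantity that provably decreases, and you have not exhibited one. Your first invariant (uniqueness of near-midpoints at stem cut vertices) cannot supply it either: for distortion $C$ bounded away from $1$, approximate-midpoint structure is simply not transported with any useful quantitative loss, and, as you note, $D_m$ at fine scales contains configurations that mimic a cut vertex up to an additive error of the order of the scale, which is exactly the slack a $C$-bilipschitz map is allowed. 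The topological facts you start from (no cut vertices in $D_m$, degree-$1$ and cut vertices in $L_n$) are true but are not bilipschitz invariants, so by themselves they prove nothing about embeddings with distortion $C>1$.

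For comparison, the paper's proof does iterate down the self-similar structure of $L_n$, but the quantity it tracks is not the distortion of a sub-embedding; it is the \emph{diameter of the subdiamond carrying the image of a cycle}. The key rigidity input is that every cycle in $D_m$ is a principal cycle of a subdiamond and hence has length a power of $2$ equal to twice the subdiamond's height (Lemma \ref{L:CyclesInDiam}), together with a collapsing argument (via the Rabinovich--Raz lemma on maps into trees) showing that the image of each isometric $4^h$-cycle of $L_n$ must contain a comparably long cycle (Lemma \ref{L:LongCycle}). One then takes a quaternary hierarchy of cycles in $L_n$, each parent of length $4^{s-m}$ having four pairwise disjoint children of length $4^{s-m-1}$ meeting it; the associated subdiamonds are nested, avoid the parent's top and bottom, and are disjoint, which forces at least two of the four children's subdiamonds to have diameter smaller by a factor of $8$ per generation, while the Laakso cycles shrink only by a factor of $4$. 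After about $2(k+1)$ generations the two geometric rates collide and give $2^{s-t}\le 4^{k+1}$, a contradiction. If you want to salvage your outline, this is the kind of concrete, provably decreasing quantity your self-improvement scheme needs; the cut-vertex picture alone will not close the argument.
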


We refer to \cite{BM08} for graph-theoretical terminology and to
\cite{Ost13} for terminology of the theory of metric embeddings.

\section{Proof of the main result}

Since,  clearly, the validity of Theorem \ref{T:LaaksoIntoDmnds}
does not depend on whether weighted or unweighted version  of
$\{D_n\}$ and $\{L_n\}$  is considered, our attention in this
section is restricted to unweighted versions of $\{D_n\}$ and
$\{L_n\}$. The vertex set of a graph $G$ will be denoted by
$V(G),$ and the edge set by $E(G).$
\medskip

To prove Theorem \ref{T:LaaksoIntoDmnds}, it suffices to show
that, for any $k\in\mathbb{N}$, no matter how the numbers
$m(n)\in\mathbb{N}$ and $p(n)\in\mathbb{N}\cup\{0\}$ are chosen,
it is impossible to find maps $F_n: V(L_n)\to V(D_{m(n)})$ such
that
\begin{equation}\label{E:bilip}\forall n~\forall u,v\in
V(L_n)\quad 4^{p(n)}d_{L_n}(u,v)\le d_{D_{m(n)}}(F_nu,F_nv)\le
4^k\cdot 4^{p(n)}d_{L_n}(u,v).\end{equation}

It should be pointed out that there is no need to consider
negative values of $p(n)$. Indeed, in such cases, one may replace
$D_{m(n)}$ by $D_{m(n)-2p(n)}$ and use the natural map of
$D_{m(n)}$ into $D_{m(n)-2p(n)}$, which multiplies all distances
by $4^{-p(n)}$.\medskip

To show the impossibility of finding $\{F_n\}$ satisfying the
condition above assume that such maps exist. Observe that $L_n$
$(n\ge 1)$ contains $4^h$-cycles isometrically for all
$h\in\{1,\dots, n\}$. Some families of such cycles, labelled by
quaternary trees $Q_a$ with $a\in\mathbb{N}$ generations, will be
used in the forthcoming reasonings.\medskip

\begin{definition}\label{D:quart}
The {\it quaternary tree} $Q_a$ of {\it depth} $a$ is defined as
the graph whose vertex set $V(Q_a)$ is the set of all sequences of
length $\le a$ with entries $\{0,1,2,3\}$, including the empty
sequence $\emptyset$; and the edge-set $E(Q_a)$ is defined by the
following rule: two sequences are joined by an edge if and only if
one of them is obtained from the other by adding an element at the
right end.
\end{definition}

For any $s,t\in\{1,\dots,n\}$, $s>t$, we can find a set
$\mathcal{C}_{s,t}$ of cycles in $L_n$ labelled by $Q_{s-t}$ and
satisfying the next two conditions:

\begin{itemize}

\item The cycle $c_\emptyset$ corresponding to $\emptyset$ has
length $4^{s}$ and common vertices with the cycle of length $4^n$
in $L_n$ (it coincides with the latter cycle if $s=n$). Note that
unless $s=n$ there are many such cycles, we pick one of them.

\item Let $\tau$ be a $\{0,1,2,3\}$-sequence with $m$ entries
(where $m<s-t$), for which we have already defined the
corresponding cycle $c_\tau$ of length $4^{s-m}$. Then cycles
$c_{\tau,0}$, $c_{\tau,1}$, $c_{\tau,2}$, and $c_{\tau,3}$
corresponding to sequences $\{\tau,0\}$, $\{\tau,1\}$,
$\{\tau,2\}$, $\{\tau,3\}\in Q_{s-t}$ are cycles of length
$4^{s-m-1}$ such that no pair of them has a common vertex, but
each of them has common vertices with $c_\tau$.
\end{itemize}

Here, $L_n$ and $D_n$ are regarded not only as combinatorial
graphs, but also as $1$-dimensio\-nal simplicial complexes
according to the following  standard procedure: each edge is
identified with a line segment of length $1$ and the distance is
the shortest path distance (see \cite[pp.~82--83]{RR98}). There is
a natural way to extend the maps $F_n$ to these simplicial
complexes in such a manner that the Lipschitz constant of $F_n$
does not change: pick, for each edge $uv\in E(L_n)$, one of the
shortest paths between $F_nu$ and $F_nv$ and map the edge into the
path so that, for any $t\in uv$, one has:
\[d_{D_{m(n)}}(F_nu,F_nt)=d_{L_n}(u,t)\cdot\frac{d_{D_{m(n)}}(F_nu,F_nv)}{d_{L_n}(u,v)}.
\]

Select a cycle of length $4^h$ in $L_n$ and denote it $C_{4^h}$.
The assumption on $F_n$ implies that the images of consecutive
vertices of $C_{4^h}$ under $F_n$ can be joined by the shortest
paths of lengths between $4^{p(n)}$ and $4^{p(n)+k}$. As a result,
one obtains a closed walk of length between $4^{p(n)+h}$ and
$4^{p(n)+k+h}$. It can be shown by example that this closed walk
does not have to be a cycle as it can have additional
self-intersections. However, as it will be proved in Lemma
\ref{L:LongCycle}, for $h$ which is substantially larger than $k$,
- in the sense described below - the image $F_nC_{4^h}$ has to
contain a large cycle in $D_{m(n)}$.\medskip

The following definitions are used in the sequel. Some of them
were introduced in \cite{JS09} and \cite{Ost14a}.

\begin{definition}\label{D:SubdiamPrinC} (i) A {\it subdiamond} of $D_{n}$ is a
subgraph which evolved from an edge of some $D_k$ for $0\le k\le
n$. Similar to  the notions of the {\it top} and {\it bottom} of
$D_{n}$ introduced above, we define the {\it top (bottom) of a
subdiamond} $S$ as its vertex which is the closest to the top
(bottom) of $D_{n}$ among vertices of $S$.

(ii) When a subdiamond $S$ evolves from an edge $uv$, in the first
step we obtain a quadrangle $u,a,v,b$. The vertex of $S$
corresponding to $a$ is called the {\it leftmost}, while  the
vertex of $S$ corresponding to $b$ is called the {\it rightmost}
vertex of $S$. Obviously, a choice of left and right here is
arbitrary, but sketching a diamond in the plane - recall that all
of $\{D_n\}$ and $\{L_n\}$ are planar graphs - one can follow our
choices. The distance between the top and the bottom of a
subdiamond is called the {\it height} of the subdiamond.

(iii) By a {\it principal cycle} in a subdiamond $S$ of $D_{n}$ we
mean a cycle which consists of two parts: a path from top to
bottom which passes through the leftmost vertex and a path from
bottom to top which passes through the rightmost vertex. A
principal cycle is considered not only as a graph-theoretical
cycle, but also as a $1$-dimensional simplicial complex
homeomorphic to a circle.
\end{definition}

The next lemma reveals a simple property of cycles in diamonds.

\begin{lemma}\label{L:CyclesInDiam} Each cycle $C$ in $D_{n}$
is a principal cycle in one of subdiamonds of $D_{n}$. Each
principal cycle in a subdiamond $S$ of height $2^t$ has length
$2^{t+1}$.
\end{lemma}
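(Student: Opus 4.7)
The plan is to exploit the recursive structure of $D_n$: going from $D_{n-1}$ to $D_n$, each edge $e=uv$ is replaced by a $4$-cycle $u, a_e, v, b_e$, where the new vertices $a_e, b_e$ each have degree exactly $2$ in $D_n$. Both parts of the lemma will follow by inductions built on this degree-$2$ property.

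For the first statement I would induct on $n$; the cases $n=0, 1$ are immediate. For the inductive step, let $C$ be a cycle in $D_n$. Since every new vertex $a_e$ or $b_e$ has degree $2$, whenever $C$ meets such a vertex it must use both incident edges, and consequently the number of edges of $C$ contained in any level-$n$ subdiamond $S_e$ (a $4$-cycle) is $0$, $2$, or $4$. If it is $4$ for some $e$, then $C=S_e$ is itself the principal cycle of the height-$2$ subdiamond $S_e$ and we are done. Otherwise, contracting each $S_e$ back to its originating edge $e$ produces a simple cycle $C'$ in $D_{n-1}$, to which the inductive hypothesis applies: $C'$ is a principal cycle of some subdiamond $S$ of $D_{n-1}$. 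Because the top, bottom, leftmost, and rightmost of $S$ are vertices of $D_{n-1}$ and remain the corner vertices of $S$ when $S$ is regarded as a subdiamond of $D_n$, the lift $C$ of $C'$ retains a decomposition into one arc through the leftmost and one arc through the rightmost of $S$; hence $C$ is a principal cycle of $S$ in $D_n$.

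For the length statement, I would first reduce to the case $S=D_t$ using the fact that every subdiamond of height $2^t$ in $D_n$ is graph-isomorphic to $D_t$ with corresponding top, bottom, leftmost, and rightmost. I then induct on $t$: the base case $t=1$ is the single $4$-cycle of length $4=2^{1+1}$. For $t\ge 2$, any principal cycle $C$ of $D_t$ must visit both the leftmost and the rightmost of $D_t$, which lie in different level-$t$ subdiamonds, so $C$ cannot be contained in a single level-$t$ subdiamond. Contracting each level-$t$ subdiamond back to its edge therefore projects $C$ onto a cycle $C'$ in $D_{t-1}$ whose length is exactly half that of $C$, and the ``same corners'' argument from the previous paragraph shows $C'$ is a principal cycle of $D_{t-1}$. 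The inductive hypothesis yields length $2^t$ for $C'$ and hence length $2^{t+1}$ for $C$.

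The main obstacle I anticipate is verifying that the contracted cycle $C'$ is a principal cycle of the correct subdiamond, rather than of some smaller one. The content of this step is precisely that the four corner vertices of $S$ already exist in $D_{n-1}$, so that the decomposition of $C'$ as an arc through the leftmost plus an arc through the rightmost of $S$ is inherited directly from the corresponding decomposition of $C$ in $D_n$; combined with the fact that the contraction halves edge-counts (equivalently, lifting doubles them), the two parts of the lemma then emerge together.
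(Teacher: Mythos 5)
Your proof is correct, but it takes a genuinely different route from the paper's. The paper argues top-down: it takes the \emph{smallest} subdiamond $S$ containing $C$, observes that $C$ must then have edges in at least two of the four half-height subdiamonds of $S$, and uses the fact that either the pair top--bottom or the pair leftmost--rightmost of $S$ separates these pieces to conclude that all four corner vertices lie on $C$ and that $C$ splits into the two required arcs; only the length computation is done by induction on the height, descending through the half-height subdiamonds. You instead argue bottom-up by induction on $n$, contracting each finest-level quadrilateral to its originating edge and using the degree-$2$ property of the newly created vertices to see that a cycle either coincides with one of these quadrilaterals or projects to a simple cycle of $D_{n-1}$ of exactly half the length, with the corner vertices of the relevant subdiamond surviving the contraction. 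Your version buys a more mechanical argument in which both assertions of the lemma fall out of the same contraction device (in particular the factor $2$ in the length is immediate), at the cost of the bookkeeping needed to verify that contraction sends simple cycles to simple cycles and that the decomposition into the leftmost and rightmost arcs lifts back; the paper's version is shorter but delegates its separation claims (``it is clear that\dots'') to the reader's picture of the diamond. Both are valid proofs of the statement.
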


\begin{proof} Let $S$ be the smallest subdiamond in $D_n$
containing the cycle $C$. Let $2^t$ be the height of $S$ and let
$S_1,S_2,S_3,$ and $S_4$ be the subdiamonds of $S$ of height
$2^{t-1}$. See Figure \ref{F:Diamond2}. Since $C$ is not contained
in any of $S_1,S_2,S_3,S_4$, it has common edges with at least two
of them, say, $S_1$ and $S_2$. It is clear that either the pair
top-bottom of $S$ or the pair leftmost-rightmost vertices
separates $S_1$ from $S_2$ in $C$. Therefore, both vertices of the
corresponding pair should be in the cycle. In the top-bottom case,
there should be two paths joining top and bottom. Since the
leftmost-rightmost vertices form a cut, one of the paths should go
through the leftmost vertex, and the other through the rightmost
vertex. To show that each of them is of length $2^t$, we use the
induction on $t$.
\medskip

For $t=1$ the situation is clear (sketch $D_1$). Assume that  the
statement holds for $t=k$ and consider the case $t=k+1$. Then the
path goes either through the leftmost or the rightmost vertex. In
any event, it goes from top to bottom of each of the subdiamonds
on the corresponding side. Using the induction hypothesis, one
obtains the desired result.

The leftmost-rightmost case is treated likewise.
\end{proof}

\begin{lemma}\label{L:LongCycle} The subgraph $M$ in $D_{m(n)}$ spanned by the closed
walk $F_nC_{4^h}$ contains a cycle of length $\ell$ satisfying
\begin{equation}\label{E:ell}\ell\ge 4^{p(n)}\left(\frac{4^h}3-1-4^k\right).\end{equation}
\end{lemma}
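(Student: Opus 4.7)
The plan is to partition $C_{4^h}$ into three arcs of approximately equal length and use the bilipschitz bound to produce three long paths in $M$ whose union must contain the desired cycle. First, choose three vertices $u_0,u_1,u_2 \in V(C_{4^h})$ so that consecutive $u_i$'s are at $L_n$-distance $\lfloor 4^h/3 \rfloor$ or $\lceil 4^h/3 \rceil$, write $A_i$ for the arc from $u_i$ to $u_{i+1}$ (indices mod $3$), and set $v_i := F_n(u_i)$. The lower bilipschitz bound then gives $d_{D_{m(n)}}(v_i, v_{i+1}) \ge 4^{p(n)}(4^h/3 - 1)$, while each image walk $\Gamma_i := F_n(A_i)$ in $M$ from $v_i$ to $v_{i+1}$ has length at most $4^{p(n)+k}(4^h/3 + 1)$.

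Second, I would extract from each walk $\Gamma_i$ a simple path $P_i$ from $v_i$ to $v_{i+1}$ by iteratively shortcutting at repeated vertices; then $P_i \subseteq \Gamma_i \subseteq M$, and since $P_i$ is a path in $D_{m(n)}$ connecting $v_i$ and $v_{i+1}$, it satisfies $|P_i| \ge 4^{p(n)}(4^h/3 - 1)$. The three paths $P_0, P_1, P_2 \subseteq M$ then form a ``triangle'' joining $v_0, v_1, v_2$ cyclically.

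The decisive step is to locate a long cycle inside $P_0 \cup P_1 \cup P_2$. In the easy case where the three paths are pairwise internally vertex-disjoint, their union is itself a single simple cycle of length $\sum_i |P_i| \ge 4^{p(n)}(4^h - 3)$, far exceeding the required bound. In general, whenever two of the paths share an internal vertex $x$, splitting them at $x$ produces several candidate cycles through $x$; I plan to show, by a pigeonhole/averaging argument, that the lengths of these candidates sum to at least $\sum_i |P_i|$, so one of them has length at least $\tfrac{1}{3}\sum_i |P_i| \ge 4^{p(n)}(4^h/3 - 1)$. The ``$-4^k$'' correction in the stated bound is there to absorb the possible loss of at most one edge-segment of $W$, of length $\le 4^{p(n)+k}$, during this cut-and-paste at the shared vertex.

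The main obstacle I expect is treating the most general intersection pattern, where all three $P_i$'s may share many internal vertices in possibly different cyclic orders. Here I would lean on Lemma \ref{L:CyclesInDiam}: every cycle in $M$ is a principal cycle of a subdiamond of $D_{m(n)}$, so any ``local'' merging between two of the $P_i$'s is confined to a subdiamond whose internal structure is well understood. This should permit a systematic reduction of the general case to the single-shared-vertex analysis above.
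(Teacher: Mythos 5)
There is a genuine gap at your ``decisive step.'' The union of three simple paths $P_0,P_1,P_2$ joining three pairwise distant vertices $v_0,v_1,v_2$ need not contain a long cycle --- it need not contain any cycle at all. If the three paths overlap in the pattern of a tripod (each $P_i$ running from $v_i$ to a common median vertex $c$ and then out to $v_{i+1}$), their union is a tree, the ``candidate cycles through a shared vertex'' all degenerate to backtracking walks, and the proposed averaging inequality (sum of candidate cycle lengths $\ge\sum_i|P_i|$) fails outright: the left side is zero. Diamond graphs do contain tripod subgraphs (any vertex of degree $\ge 3$ gives one), so nothing in $D_{m(n)}$ rules this configuration out a priori. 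Using the bilipschitz condition only at the three points $u_0,u_1,u_2$ is therefore insufficient; the entire content of the lemma is to exclude such tree-like images, and that requires using the lower bilipschitz bound along the \emph{whole} cycle, not just at three marked vertices. Your appeal to Lemma \ref{L:CyclesInDiam} at the end points in the right direction but is not developed into an argument.

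For comparison, the paper's proof proceeds by contradiction: assuming every cycle in $F_nC_{4^h}$ is shorter than the claimed bound, it collapses every maximal subdiamond whose principal cycle lies in $F_nC_{4^h}$ to a path (Lemma \ref{L:CyclesInDiam} guarantees every cycle in $D_{m(n)}$ is such a principal cycle, so after collapsing, the image of $C_{4^h}$ is a tree). It then invokes \cite[Proposition 5.1]{RR98}: any map of a circle of length $4^h$ into a tree identifies two points at distance $\ge 4^h/3$. Pulling back through the collapse map, the $F_n$-images of these two points are within $2^\nu$ (the diameter of the largest collapsed subdiamond) of each other, and rounding to nearest vertices costs an extra $4^{p(n)+k}$; this contradicts the lower bilipschitz bound. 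Note that the constants in \eqref{E:ell} have this provenance: the $\frac{4^h}{3}$ is the Rabinovich--Raz constant for circles versus trees, the $-1$ comes from rounding the two identified points to vertices of $C_{4^h}$, and the $-4^k$ absorbs the resulting displacement of their images --- not a cut-and-paste loss at a shared vertex as you conjecture. To repair your approach you would essentially have to reprove this circle-into-tree fact and combine it with the collapsing construction, at which point you have reproduced the paper's argument.
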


\begin{proof} Assume the contrary, that is, let the longest cycle contained
in $F_nC_{4^h}$ be of length
$<4^{p(n)}\left(\frac{4^h}3-1-4^k\right)$

Denote by $\nu$  the largest integer satisfying
\begin{equation}\label{E:nu}2^{\nu}< 4^{p(n)}\left(\frac{4^h}3-1-4^k\right).\end{equation}

We collapse  all subdiamonds which have principal cycles contained
in $F_nC_{4^h}$. By this we mean that, for each subdiamond $S$ of
height $2^\mu$ with $\mu\le\nu$, such that one of its principal
cycles is contained in $F_nC_{4^h}$, but $S$ is not (properly)
contained in a larger subdiamond whose principal cycle is also
contained in $F_nC_{4^h}$, we do the following. Replace $S$ by a
path of length $2^\mu$ and map all edges of the subdiamond onto
edges of the path in such a way that their distances from the top
and bottom of the subdiamond are preserved. Denote by $G$ the
graph obtained by applying this procedure to $D_{m(n)}$, and by
$P$ the corresponding map, viewed both as a map $V(D_{m(n)})\to
V(G)$ and $E(D_{m(n)})\to E(G)$.\medskip

Let us prove that $PF_nC_{4^h}$ is a tree. Indeed, it is obvious
that $PF_nC_{4^h}$ cannot contain a cycle $C$ of the form $P\tilde
C$, where $\tilde C$ is a cycle in $F_nC_{4^h}$ since all such
cycles $\tilde C$ were collapsed. Therefore, any of the preimages
$\tilde C$ of $C$ in $F_nC_{4^h}$ cannot be a cycle. This implies
that $C$ contains some of the paths obtained as collapsed
subdiamonds, and that, in at least one of them, the preimage of
this path in $F_nC_{4^h}$ cannot be connected. However, this is
impossible according to the preceding definitions.
\medskip

Therefore, by virtue  of \cite[Proposition 5.1]{RR98}, there are
two points $x,y$ in the cycle $C_{4^h}$ (regarded as a
$1$-dimensional simplicial complex) at distance $d_{L_n}(x,y)\ge
4^{h}/3$, such that $PF_nx=PF_ny$. Since $2^\nu$ is the diameter
of the largest subdiamond which was collapsed and two distinct
points in different collapsed subdiamonds are not mapped to the
same point, the distance between two preimages of the same point
under $P$ is at most $2^\nu$. Thence, $d_{D_{m(n)}}(F_nx,F_ny)\le
2^\nu$. As points $x$ and $y$ do not have to be vertices, we find
the closest vertices of $C_{4^h}$ to them, say, $v_x$ and $v_y$.
Then $d_{L_n}(x,v_x)\le\frac12$ and $d_{L_n}(y,v_y)\le\frac12$,
whence
\[d_{D_{m(n)}}(F_nv_x,F_nv_y)\le 2^\nu+4^{p(n)+k}.\]

On the other hand, inequality $d_{L_n}(x,y)\ge 4^{h}/3$ yields:
\[d_{D_{m(n)}}(F_nv_x,F_nv_y)\ge
\left(\frac{4^{h}}3-1\right)4^{p(n)}.\] Thus, $2^\nu\ge
4^{p(n)}\left(\frac{4^h}3-1- 4^k\right)$, contrary to
\eqref{E:nu}.
\end{proof}

This lemma implies the following claim.

\begin{claim}\label{C:CinC} If $h$ satisfies
\begin{equation}\label{E:ellLarge}\left(\frac{4^h}3-1-4^k\right)\ge 4^{h-1},\end{equation}
then the $F_n$-image of any (isometric) cycle $C_{4^h}$ of length
$4^h$ in $L_n$ contains a cycle whose  length in $D_{m(n)}$ is
between $4^{p(n)+h-1}$ and $4^{p(n)+h+k}$.
\end{claim}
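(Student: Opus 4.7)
The plan is to derive the claim as a more-or-less immediate consequence of Lemma \ref{L:LongCycle}, doing the lower and upper bounds separately and independently. For the lower bound, I would apply Lemma \ref{L:LongCycle} to obtain a cycle inside the subgraph $M\subseteq D_{m(n)}$ spanned by the closed walk $F_nC_{4^h}$ of length
\[
\ell\ \ge\ 4^{p(n)}\!\left(\frac{4^h}{3}-1-4^k\right),
\]
and then plug in the hypothesis \eqref{E:ellLarge} to replace the parenthesized quantity by $4^{h-1}$, giving $\ell\ge 4^{p(n)+h-1}$ at once.

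For the upper bound, I would bound the total length of the closed walk $F_nC_{4^h}$. The cycle $C_{4^h}$ has $4^h$ edges; by the upper Lipschitz estimate in \eqref{E:bilip}, each such edge is mapped under the extended $F_n$ to a shortest path in $D_{m(n)}$ of length at most $4^{p(n)+k}$. Consequently, the walk $F_nC_{4^h}$ traverses at most $4^h\cdot 4^{p(n)+k}=4^{p(n)+h+k}$ edges (counted with multiplicity). Since any cycle in $M$ is built from distinct edges of $M$, and each edge of $M$ is traversed at least once by the walk, the cycle length is bounded above by the number of edges appearing in the walk, giving $\ell\le 4^{p(n)+h+k}$.

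Combining the two bounds yields the desired interval. The main work has already been done in Lemma \ref{L:LongCycle}; the only points worth being explicit about are that we are applying both bounds to the \emph{same} cycle produced by that lemma, and that the passage from ``walk of length $L$'' to ``every cycle inside the spanned subgraph has at most $L$ edges'' is legitimate because cycles do not repeat edges. I anticipate no real obstacle: this claim is essentially a packaging of Lemma \ref{L:LongCycle} together with the trivial upper bilipschitz estimate into a form suitable for iterative application to the cycles $c_\tau\in\mathcal{C}_{s,t}$ later in the argument.
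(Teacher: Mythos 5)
Your proposal is correct and matches the paper's intent: the paper states Claim \ref{C:CinC} only with the words ``This lemma implies the following claim,'' and your argument is exactly the implicit derivation --- the lower bound from Lemma \ref{L:LongCycle} combined with \eqref{E:ellLarge}, and the upper bound from the fact that any cycle in the subgraph spanned by the closed walk $F_nC_{4^h}$ uses distinct edges and hence has length at most the walk's total length $4^h\cdot 4^{p(n)+k}$. Spelling out that both bounds apply to the same cycle is a reasonable bit of extra care, not a deviation.
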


Notice that the uniqueness of such a cycle in $D_{m(n)}$ is not
asserted. Denote by $A(C_{4^h})$ any of the cycles satisfying the
conditions of Claim \ref{C:CinC} and by $S(C_{4^h})$ the
subdiamond for which $A(C_{4^h})$ is a principal cycle.\medskip

Now,  consider the collection $\mathcal{C}_{s,t}$ of cycles in
$L_n$ introduced after Definition \ref{D:quart} and labelled by
$Q_{s-t}$. We pick $n>s>t$ in such a way that $h=t$ satisfies
\eqref{E:ellLarge}, and, in addition, the three inequalities below
hold:
\begin{equation}\label{E:n-s}
n>s+k,
\end{equation}

\begin{equation}\label{E:s-t}
s-t>2(k+1),
\end{equation}

\begin{equation}\label{E:t}
t>10(k+1).
\end{equation}

At this stage, the  {\bf question} arises: Given a cycle
$A(c_\tau)$ with sequence $\tau$ of length $m<s-t$, what can be
said about the cycles $A(c_{\tau,0}),A(c_{\tau,1}),A(c_{\tau,2})$,
$A(c_{\tau,3})$ and the corresponding subdiamonds $S(c_{\tau,0})$,
$S(c_{\tau,1}),S(c_{\tau,2})$, and $S(c_{\tau,3})$? The writing
$\tau,0$ means the sequence obtained from $\tau$ by adding $0$ at
the right end.
\medskip

Lemmas \ref{L:TopBot}, \ref{L:DoNotTouch}, and \ref{L:Decr8} below
provide an answer to this question.\medskip

\begin{lemma}\label{L:TopBot} The subdiamonds $S(c_{\tau,0}),S(c_{\tau,1}),S(c_{\tau,2})$, and
$S(c_{\tau,3})$ are contained in $S(c_\tau)$, but they contain
neither the top nor the bottom of $S(c_\tau)$.
\end{lemma}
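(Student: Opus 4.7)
I would begin by applying Claim~\ref{C:CinC} to both $c_\tau$ (with $h = s-m$) and $c_{\tau,i}$ (with $h = s-m-1$). Inequalities~\eqref{E:s-t}, \eqref{E:t}, and $m < s-t$ ensure that both values of $h$ are at least $t$, so~\eqref{E:ellLarge} is met; this produces cycles $A(c_\tau)$ and $A(c_{\tau,i})$ in $D_{m(n)}$ with lengths in $[4^{p(n)+s-m-1}, 4^{p(n)+s-m+k}]$ and $[4^{p(n)+s-m-2}, 4^{p(n)+s-m-1+k}]$, respectively. By Lemma~\ref{L:CyclesInDiam}, $S(c_\tau)$ and $S(c_{\tau,i})$ are the subdiamonds whose heights equal half of the corresponding cycle lengths. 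The crucial Laakso-graph input is that $c_{\tau,i}$ shares with $c_\tau$ a connected arc $\alpha$ of length $4^{s-m-1}/2$, whose endpoints $u,v$ are at $L_n$-distance $4^{s-m-1}/2$, while its complementary half $\beta$ is a path of the same length from $v$ back to $u$ otherwise disjoint from $c_\tau$.

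For the containment $S(c_{\tau,i}) \subseteq S(c_\tau)$, I would exploit the structural fact that two subdiamonds of $D_{m(n)}$ are either nested or meet in at most a single vertex (a shared top or bottom). The shared sub-walk $F_n\alpha$ lies in both $F_n c_\tau$ and $F_n c_{\tau,i}$, providing enough overlap to rule out the ``single-vertex'' case. The nesting direction is then forced by combining the diameter bound $\mathrm{diam}(F_n c_{\tau,i}) \le 4^{p(n)+s-m-1+k}/2$ with the observation that any exit from $S(c_\tau)$ must occur through its top or bottom: the cycle $A(c_{\tau,i})$ cannot escape $S(c_\tau)$ without the walk $F_n c_{\tau,i}$ expending a length inconsistent with the Lipschitz bound on the total length $4^{s-m-1}$ of $c_{\tau,i}$.

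For the avoidance of the top and bottom of $S(c_\tau)$, I would argue by contradiction. If the top of $S(c_\tau)$ belonged to $S(c_{\tau,i})$, then the nesting would force the top of $S(c_{\tau,i})$ to coincide with the top of $S(c_\tau)$, so $A(c_{\tau,i})$ would pass through this vertex. Since $A(c_\tau)$ passes through the same vertex and $F_n$ satisfies the lower Lipschitz bound $4^{p(n)}$, the preimages in $c_\tau$ and $c_{\tau,i}$ of this top must coincide (their $L_n$-distance collapses to zero), forcing the common preimage to lie in $c_\tau \cap c_{\tau,i} = \alpha$. Tracking the preimage of the bottom of $S(c_{\tau,i})$ via the non-shared arc $\beta$, and invoking the size constraints from \eqref{E:n-s}, \eqref{E:s-t}, \eqref{E:t}, would then yield the contradiction. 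The chief obstacle throughout is that Claim~\ref{C:CinC} pins down the cycle lengths only up to a factor of $4^{k+1}$, so the heights of $S(c_\tau)$ and $S(c_{\tau,i})$ are not tightly comparable---$S(c_{\tau,i})$ could a priori be up to $4^k$ times larger than $S(c_\tau)$. The proof cannot rely on size alone and must delicately combine the shared-arc topology, the rigidity of nested subdiamonds, and both Lipschitz bounds to pin down the exact placement of $A(c_{\tau,i})$ inside $S(c_\tau)$.
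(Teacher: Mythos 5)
Your proposal diverges from the paper's argument and, as written, has genuine gaps at its two load-bearing steps. For the containment $S(c_{\tau,i})\subseteq S(c_\tau)$ you invoke the dichotomy that two subdiamonds are either nested or meet in at most one vertex, and you rule out the second alternative because the closed walks $F_nc_\tau$ and $F_nc_{\tau,i}$ share the sub-walk $F_n\alpha$. But Claim \ref{C:CinC} only extracts \emph{some} long cycle $A(c_\tau)\subseteq F_nc_\tau$ and $A(c_{\tau,i})\subseteq F_nc_{\tau,i}$; neither extracted cycle need contain any point of $F_n\alpha$, so overlap of the walks says nothing about overlap of the cycles or of their subdiamonds. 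The nesting direction and the avoidance of the top and bottom are then each left at the level of ``would yield a contradiction'': you correctly flag that $S(c_{\tau,i})$ could a priori be up to $4^k$ times larger than $S(c_\tau)$, but you do not supply the estimate that excludes this; and your avoidance step needs to know \emph{where} on $c_\tau$ the preimage of the top of $S(c_\tau)$ sits, which you never establish (the lower Lipschitz bound makes two preimages of the same point close to each other, but does not place them in $\alpha$).

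The paper proceeds quite differently, and the difference is precisely the missing mechanism. It first uses \eqref{E:n-s} to show that $F_nL_n$ is too large to fit inside $S(c_\tau)$, so the top $v_t$ and bottom $v_b$ of $S(c_\tau)$ are the only gates between the interior of $S(c_\tau)$ and the rest of $D_{m(n)}$, and their $F_n$-preimages (sets of diameter at most $4^k+1$) must separate $L_n$ into at least three components. It then develops the cyclic-order machinery (Lemmas \ref{L:SameSide} and \ref{L:NaturalOrder}) to show that these preimages lie in short arcs of $c_\tau$ (of length at most $4^{3k}$) around the two vertices $c_T,c_B$ at which $c_\tau$ is attached to the rest of $L_n$; since every $c_{\tau,i}$ is at distance at least $4^{t-1}\gg 4^{3k}$ from $c_T$ and $c_B$ by \eqref{E:t}, the connected sets $F_nc_{\tau,i}$ avoid $v_t$ and $v_b$ and are therefore trapped strictly inside $S(c_\tau)$ --- which yields the containment and the avoidance simultaneously. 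To complete your route you would still need a separation argument of this type; the nesting dichotomy and length bookkeeping alone do not suffice.
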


\begin{proof} Since  $\diam (L_n)=4^n$, we have $\diam(F_nL_n)\ge
4^{p(n)+n}$. On the other hand, the condition $\diam
(c_\tau)=\frac12\,4^{s-m}$ implies $\diam S(c_\tau)=\diam
A(c_\tau)\le\frac12 4^{s-m+p(n)+k}\le \frac12 4^{s+p(n)+k}$, where
the equality  reflects an easy property of principal cycles.
Meanwhile, inequality \eqref{E:n-s} implies that $F_nL_n$ cannot
be contained in $S(c_\tau)$.

Notice that the deletion of $\{v_t,v_b\}$, where $v_t$ is the top
and $v_b$ is the bottom of $S(c_\tau)$ splits the diamond
$D_{m(n)}$ into three pieces: the part containing the rightmost
vertex of $S(c_\tau)$, the part containing the leftmost vertex of
$S(c_\tau)$, and the complement of $S(c_\tau)$. Using the previous
paragraph and the definition of $S(c_\tau),$ one concludes that
each of these parts has nonempty preimages under $F_n^{-1}$.
Hence, preimages of $v_t$ and $v_b$ split $L_n$ into at least
three connected components: the component containing the preimage
of the leftmost vertex, the component containing the preimage of
the rightmost vertex, and the component containing the vertices of
$L_n$ with the maximal distance to $c_\tau$.

It can be observed that the subsets $F_n^{-1}(v_t)$ and
$F_n^{-1}(v_b)$ in $L_n$ have diameters $\le 4^k+1$, and the same
is also true for the preimage of any point of $F_nL_n$. Indeed,
assume the contrary, that is, let $F_nx=F_ny$ and
$d_{L_n}(x,y)>4^k+1$. Let $v_x,v_y\in V(L_n)$ be the nearest to
$x$ and $y$ vertices. Then $d_{L_n}(v_x,v_y)>4^k$, but
$d_{D_{m(n)}}(F_nv_x,F_nv_y)\le 4^{p(n)+k}$. This, however,
contradicts \eqref{E:bilip}.

To show that the preimages of $v_t$ and $v_b$ cannot intersect any
of $c_{\tau,0},c_{\tau,1},c_{\tau,2}$, and $c_{\tau,3}$, some more
information on the action of the map $F_n$ on $c_\tau$ is
required.\medskip

The cycle $c_\tau$ has length $4^{s-m}$ and $s-m>t$. By Claim
\ref{C:CinC}, the length of $A(c_\tau)$ is at least
$4^{p(n)+s-m-1}\ge 4^{p(n)+t}$.

We divide the cycle $A(c_\tau)$ into pieces of length
$4^{p(n)+k}$. By Lemma \ref{L:CyclesInDiam} the length of every
cycle in $D_{m(n)}$ is a power of $2$. Since by \eqref{E:t},
$4^{p(n)+t}>4^{p(n)+k}$, the length of $A(c_\tau)$ is divisible by
$4^{p(n)+k}$. We get at least $4^{s-m-k-1}$ pieces whose endpoints
we denote by $u_r$  and the preimage of each $u_r$ in $c_\tau$ by
$a_r$. In the case of several preimages, we pick one of them. We
may and shall assume that both $v_t$ and $v_b$ are among
$\{u_r\}$. Inequality \eqref{E:bilip} implies that
\begin{equation}\label{E:dista_r}
\forall q\in\{1,2,\dots,4^{p(n)+s-m-2}\},\quad
d_{L_n}(a_r,a_{r+q})\in [q,q\cdot 4^k].
\end{equation}

One can show by example that the sequence $\{a_r\}$ does not have
to be located on $c_\tau$ in a cyclic order. However, it will be
shown that if we replace $4^{p(n)+k}$ in the definition of
$\{u_r\}$ by $4^{p(n)+2k}$, the preimages $\{b_r\}$ of the
resulting sequence $\{v_r\}$ are located on $c_\tau$ in a cyclic
order.\medskip

By \eqref{E:t},  $t>10 k$, whence the number $4^k$ is
significantly smaller than $4^{s-m}$ (the length of $c_\tau$), and
the following statement is meaningful.

\begin{lemma}\label{L:SameSide} The points $a_{4^k}$ and $a_{4^k+1}$ are on the same
side of $a_0$. Furthermore, all points
$a_{4^k},a_{4^k+1},a_{4^k+2},\dots, a_q$ with $q=\frac\124^{9k}$
are on the same side of $a_0$.
\end{lemma}

\begin{proof} The assumption that $a_{4^k}$ and $a_{4^k+1}$
are on different sides of $a_0$ implies that
\[\begin{split}d_{L_n}(a_{4^k},a_{4^k+1})&\ge\min\{d_{L_n}(a_{4^k},a_0)+d_{L_n}(a_0,a_{4^k+1}),
4^{s-m}-(2\cdot 4^k+1)4^k\}
\\&\stackrel{\eqref{E:dista_r}}{\ge}\min\{2\cdot 4^k+1,
4^{s-m}-(2\cdot 4^k+1)4^k\}\stackrel{\eqref{E:t}}{=}2\cdot 4^k+1,
\end{split}\] while the distance between $u_{4^k}$ and $u_{4^k+1}$ is
$4^{p(n)+k}$. We obtain a contradiction.

The second statement can be proved by using this reasoning along
with  the induction.
\end{proof}

Now, let $v_r=u_{r\cdot 4^k}$, $r=1,2,\dots$, and let
$b_r=a_{r\cdot 4^k}$ be the preimages of $v_r$.

\begin{lemma}\label{L:NaturalOrder} The sequence $\{b_r\}$ is placed on the
cycle $c_\tau$ in its natural order.
\end{lemma}

\begin{proof} Assume the contrary, that is, for some $r$ the order is $b_r, b_{r+2},
b_{r+1}$. We apply Lemma \ref{L:SameSide} to two pairs: (1)
$b_r=a_{r\cdot 4^k}$ (playing the role of $a_0$) and
$b_{r+2}=a_{(r+2)\cdot 4^k}$ and (2) $b_{r+1}=a_{(r+1)\cdot 4^k}$
(playing the role of $a_0$) and $b_{r+2}=a_{(r+2)\cdot 4^k}$. As a
result we conclude, by induction, that points $a_{(r+2)4^k+i}$,
$i=1,2,\dots,4^{8k}$ should be between $b_r$ and $b_{r+2}$. By
\eqref{E:dista_r}, this is impossible.
\end{proof}

It is easy to see that one may assume that $v_t$ and $v_b$ are
among $\{v_r\}$, that is, the adopted notation is consistent. With
some abuse of notation, assume that $t$ and $b$ are not only the
abbreviations for  `top' and `bottom', but also integers. We
denote the corresponding elements of $\{b_r\}$ by $b_t$ and $b_b$.

Let us denote the vertices at which $c_\tau$ is connected to the
largest cycle of $L_n$ by $c_T$ and $c_B$, respectively.
Considering pieces of  $c_\tau$ between $b_{t-2}$ and $b_{t+2}$
and also between  $b_{b-2}$ and $b_{b+2},$ we claim that both
$c_T$ and $c_B$ belong to one of these pieces of $c_\tau.$ Indeed,
suppose that $c_T$ is not in any of the mentioned pieces. Since
$F_n^{-1}(v_t)$ and $F_n^{-1}(v_b)$ are contained in the pieces,
one may assume without loss of generality that after the removal
of $F_n^{-1}(v_t)$ and $F_n^{-1}(v_b)$, the vertex $c_T$ will end
up in the same component of $L_n$ as $b_{b-2}$. It leads to a
contradiction because it implies that $b_{b-2}$ and the part of
$L_n$ containing points which are the most distant from $c_\tau$
are in the same component. The proof for $c_B$ is similar. Note
that, using the triangle inequality it is easy to verify that
$c_T$ and $c_B$ are in different pieces.

Observe that the distance between $c_T$ or $c_B$ and any of
$c_{\tau,0},c_{\tau,1},c_{\tau,2}$, and $c_{\tau,3}$ is at least
$4^{s-m-2}\ge 4^{t-1}$. On the other hand, the distance between
$b_{b-2}$ and $b_{b+2}$ is $\le 4^{3k}$. Applying inequality
\eqref{E:t}, we derive the statement of the lemma.
\end{proof}

\begin{lemma}\label{L:DoNotTouch} The cycles
$A(c_{\tau,0}),A(c_{\tau,1}),A(c_{\tau,2})$, and $A(c_{\tau,3})$
are disjoint.
\end{lemma}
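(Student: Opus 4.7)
The plan is to argue by contradiction. Suppose $A(c_{\tau,i}) \cap A(c_{\tau,j})$ contains a vertex $w$ for some distinct $i, j \in \{0,1,2,3\}$. Since $A(c_{\tau,i})$ sits inside the subgraph of $D_{m(n)}$ spanned by the closed walk $F_n c_{\tau,i}$, and similarly for $j$, both walks pass through $w$; extending $F_n$ to $L_n$ as a $1$-dimensional simplicial complex, I can pick $x \in c_{\tau,i}$ and $y \in c_{\tau,j}$ with $F_n x = F_n y = w$.

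The next step would copy the end of the proof of Lemma \ref{L:TopBot}. Let $v_x, v_y \in V(L_n)$ be the vertices of $L_n$ closest to $x$ and $y$, so that $d_{L_n}(x, v_x), d_{L_n}(y, v_y) \leq \tfrac12$. The extension formula for $F_n$ combined with the upper inequality in \eqref{E:bilip} gives $d_{D_{m(n)}}(F_n v_x, w), d_{D_{m(n)}}(F_n v_y, w) \leq \tfrac12 \cdot 4^{p(n)+k}$, and hence $d_{D_{m(n)}}(F_n v_x, F_n v_y) \leq 4^{p(n)+k}$. The lower inequality in \eqref{E:bilip} then forces $d_{L_n}(v_x, v_y) \leq 4^k$, whence $d_{L_n}(x, y) \leq 4^k + 1$.

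The main obstacle, and where the geometric content of the lemma lies, is to match this with a lower bound $d_{L_n}(c_{\tau,i}, c_{\tau,j}) \gg 4^k$. The cycle $c_\tau$ is isometrically embedded in $L_n$ with length $4^{s-m}$, and the four children $c_{\tau,0},\dots,c_{\tau,3}$ are pairwise disjoint cycles of length $4^{s-m-1}$, each attached to $c_\tau$ along an arc. From the recursive Laakso construction, these attachment arcs occupy four well-separated sectors of $c_\tau$, each of length at most $\tfrac12 \cdot 4^{s-m-1}$, with gaps of length at least $\tfrac12 \cdot 4^{s-m-1}$ between consecutive sectors; since $c_\tau$ is isometric in $L_n$ and any non-attached vertex of $c_{\tau,i}$ can reach $c_{\tau,j}$ only through its own attachment arc on $c_\tau$, this yields $d_{L_n}(c_{\tau,i}, c_{\tau,j}) \geq \tfrac12 \cdot 4^{s-m-1}$.

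Combining this bound with $s - m > t$ (which follows from $m < s-t$) and \eqref{E:t}, one obtains
\[
d_{L_n}(x, y) \geq \tfrac12 \cdot 4^{s-m-1} \geq \tfrac12 \cdot 4^{t-1} \gg 4^k + 1,
\]
contradicting the upper bound from the second paragraph. Hence no two of the cycles $A(c_{\tau,0}),\dots,A(c_{\tau,3})$ can share a vertex.
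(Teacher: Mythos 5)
Your proposal is correct and follows essentially the same route as the paper: assume a common point, pass to nearest vertices, use \eqref{E:bilip} to get an upper bound of order $4^k$ on $d_{L_n}(v_x,v_y)$, and contradict this with the separation $d_{L_n}(c_{\tau,i},c_{\tau,j})\ge \tfrac12 4^{s-m-1}\ge\tfrac12 4^{t-1}$ coming from the Laakso construction together with \eqref{E:t}. The only cosmetic differences are that you derive the contradiction in $L_n$ rather than in $D_{m(n)}$ and that you spell out the separation bound which the paper merely asserts from ``the definition of $c_{\tau,i}$, $c_{\tau,j}$.''
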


\begin{proof} We are going to show that the existence of  common
vertices contradicts the bilipschitz condition \eqref{E:bilip}. In
fact, if there are points $z\in c_{\tau, i}$, $w\in c_{\tau, j}$
$i,j\in\{0,1,2,3\}$, for which $i\ne j$ and their images coincide,
then there exist vertices $v_z\in c_{\tau, i}$ and $v_w \in
c_{\tau, j}$ in the cycles with $d_{L_n}(z,v_z)\le\frac12$ and
$d_{L_n}(w,v_w)\le\frac12$. Therefore,
$d_{D_{m(n)}}(F_nv_z,F_nv_w)\le 4^{p(n)+k}$. On the other hand,
the definition of $c_{\tau, i}$, $c_{\tau, j}$ yields
$d_{L_n}(v_z,v_w)\ge \frac124^{t}$, and thus
$d_{D_{m(n)}}(F_nv_z,F_nv_w)\ge \frac12\, 4^{p(n)+t}$. This
contradicts condition \eqref{E:t}.
\end{proof}

\begin{lemma}\label{L:Decr8} Two of the subdiamonds $S(c_{\tau,0}),S(c_{\tau,1})$,
$S(c_{\tau,2}),S(c_{\tau,3})$ have diameters at least $8$ times
smaller than the diameter of $S(c_\tau)$.
\end{lemma}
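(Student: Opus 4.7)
The idea is to exploit two structural facts about $S(c_\tau)$ together: Lemma \ref{L:TopBot} forbids each $S(c_{\tau,i})$ from containing the top $v_t$ or bottom $v_b$ of $S(c_\tau)$, while Lemma \ref{L:DoNotTouch} forces the four principal cycles $A(c_{\tau,0}),\dots,A(c_{\tau,3})$ to be pairwise disjoint. Let $2^T$ denote the height, and hence the diameter, of $S(c_\tau)$, and let $v_\ell,v_r$ be its leftmost and rightmost vertices arising from the first recursive subdivision into the four quadrants $S_{TL},S_{TR},S_{BL},S_{BR}$ of height $2^{T-1}$.

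The first step is to observe that every $S(c_{\tau,i})$ has height at most $2^{T-2}$. Since heights of subdiamonds of $S(c_\tau)$ are powers of $2$, the only subdiamonds of height greater than $2^{T-2}$ are $S(c_\tau)$ itself (height $2^T$) and the four quadrants (height $2^{T-1}$). Each of these contains $v_t$ or $v_b$, so Lemma \ref{L:TopBot} excludes them.

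The second step is to enumerate the sixteen subdiamonds of $S(c_\tau)$ of height exactly $2^{T-2}$ via the recursive structure. Inside each quadrant, two of its four sub-subdiamonds inherit the quadrant's top (and hence contain $v_t$ or $v_b$), while the other two inherit the quadrant's bottom. A direct count over the four quadrants shows that exactly eight of the sixteen contain $v_t$ or $v_b$, and each of the remaining eight has $v_\ell$ or $v_r$ as its top or bottom. Since a principal cycle passes through the top and bottom of its subdiamond, any $S(c_{\tau,i})$ of height $2^{T-2}$ places $v_\ell$ or $v_r$ on its cycle $A(c_{\tau,i})$. Disjointness of the four principal cycles then permits at most one of them to pass through $v_\ell$ and at most one through $v_r$, so at most two of the subdiamonds $S(c_{\tau,i})$ have height $2^{T-2}$; the remaining two have height at most $2^{T-3}$, i.e., diameter at most $\tfrac18\,\diam(S(c_\tau))$.

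The main obstacle is the bookkeeping in the second step: one must verify that $v_\ell$ and $v_r$ do not arise as the leftmost or rightmost of any proper subdiamond of $S(c_\tau)$, so that any height-$2^{T-2}$ subdiamond containing $v_\ell$ or $v_r$ is forced to have it as a top or bottom, hence on the principal cycle. This rigidity follows from the recursive construction of diamonds, in which the leftmost and rightmost vertices of a subdiamond only appear when that subdiamond is split at its own level.
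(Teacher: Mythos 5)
Your proof is correct and takes essentially the same route as the paper's: Lemma \ref{L:TopBot} excludes the four quadrants and eight of the sixteen height-$2^{T-2}$ subdiamonds, and the remaining eight split into two $4$-tuples sharing $v_\ell$ and $v_r$ respectively, of which Lemma \ref{L:DoNotTouch} permits at most one each. You merely make explicit a step the paper leaves implicit, namely that the shared vertex of each $4$-tuple is the top or bottom of its members and hence lies on their principal cycles, which is what turns disjointness of the cycles into the bound.
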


\begin{proof} Lemma \ref{L:TopBot}
excludes all subdiamonds of  $S(c_\tau)$ of diameter
$\frac12\diam(S(c_\tau))$ as candidates for
$S(c_{\tau,0}),S(c_{\tau,1}),S(c_{\tau,2}),S(c_{\tau,3})$. What is
more, Lemma \ref{L:TopBot} excludes $8$ out of $16$ subdiamonds of
$S(c_\tau)$ of diameter $\frac14\diam(S(c_\tau))$, as can be
viewed by sketching $D_3$. Consequently, we are left with two
$4$-tuples of subdiamonds of $S(c_\tau)$ of diameter
$\frac14\diam(S(c_\tau))$. Subdiamonds in each $4$-tuple have a
common vertex. Therefore, by Lemma \ref{L:DoNotTouch} only two of
these subdiamonds can serve as one of
$S(c_{\tau,0}),S(c_{\tau,1}),S(c_{\tau,2}),S(c_{\tau,3})$. The
conclusion follows.
\end{proof}

\begin{proof}[Proof of Theorem \ref{T:LaaksoIntoDmnds}] Lemma \ref{L:Decr8} implies that the diameter of at  least one of the
subdiamonds $S(c_\tau)$, where sequence $\tau$ comprises $s-t$
elements, does not exceed
\[\diam(S(c_\emptyset))\cdot
8^{-(s-t)}\le 4^{p(n)+k}\cdot \left(\frac12\, 4^s\right)\cdot
8^{-(s-t)}.\]
On the other hand, the assumption stating that $h=t$
satisfies \eqref{E:ellLarge} implies the inequality
$\diam(S(c_\tau))\ge\frac12 4^{p(n)+t-1}$ leading to
\[\frac12 4^{p(n)+t-1}\le 4^{p(n)+k}\cdot \left(\frac12\, 4^s\right)\cdot
8^{-(s-t)}\] or $2^{s-t}\le 4^{k+1}$.  This contradicts
\eqref{E:s-t}.
\end{proof}

\section{Consequences and discussion}\label{S:Conseq}

As an immediate outcome of Theorem \ref{T:LaaksoIntoDmnds} one
obtains the following statement:

\begin{corollary}\label{C:LinfNotIntoDinf} $L_\omega$ does not admit a bilipschitz
embedding into $D_\omega$.
\end{corollary}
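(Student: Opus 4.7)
The plan is to reduce the non-embeddability of $L_\omega$ into $D_\omega$ to the finitary result already established in Theorem \ref{T:LaaksoIntoDmnds}. Suppose, for contradiction, that there is a bilipschitz embedding $f\colon L_\omega\to D_\omega$ with distortion $C<\infty$. Use the weighted normalizations, so that for every $n$ the identity inclusion $L_n\hookrightarrow L_\omega$ is an isometry onto a subset of $V(L_\omega)=\bigcup_m V(L_m)$. Restricting $f$ to this isometric copy of $L_n$ yields a map $f_n\colon V(L_n)\to D_\omega$ whose distortion is at most $C$, with the constant $C$ independent of $n$.

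The next step is to show that each image $f_n(V(L_n))$ is already contained in some finite-level diamond. By the definition of $D_\omega$, its underlying set equals $\bigcup_m V(D_m)$, and the metric induced on each $V(D_m)$ coincides with the shortest-path metric of the weighted $D_m$. Since $V(L_n)$ is finite, the set $f_n(V(L_n))$ is a finite subset of $\bigcup_m V(D_m)$, hence contained in $V(D_{m(n)})$ for some integer $m(n)$. Thus $f_n$ descends to a $C$-bilipschitz map $V(L_n)\to V(D_{m(n)})$ with the weighted metrics; since distortion is scale-invariant, the same bound holds for the unweighted metrics.

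This produces uniformly bilipschitz embeddings of $\{L_n\}_{n=0}^\infty$ into $\{D_{m(n)}\}_{n=0}^\infty$, in direct contradiction with Theorem \ref{T:LaaksoIntoDmnds}. No step in this argument is delicate: the only point that requires a line of verification is that $f_n(V(L_n))$ lies in some $V(D_{m(n)})$, which is immediate from the finiteness of $V(L_n)$ together with $V(D_\omega)=\bigcup_m V(D_m)$. All the substantive work has already been carried out in the proof of the main theorem, so the corollary follows by this short reduction.
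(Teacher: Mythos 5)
Your proposal is correct and follows essentially the same route as the paper: restrict the hypothetical embedding to the isometric copies of $L_n$ inside $L_\omega$, observe that each finite image lands in some $V(D_{m(n)})$ because $D_\omega$ is the increasing union of the $D_m$, and contradict Theorem \ref{T:LaaksoIntoDmnds}. You merely spell out the finiteness/nestedness step that the paper leaves implicit.
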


\begin{proof} Assume the contrary. Since $L_\omega$ contains
isometric copies of (weighted) Laakso graphs
$\{L_n\}_{n=0}^\infty$, this would imply that
$\{L_n\}_{n=0}^\infty$ admits uniformly bilipschitz embeddings
into $D_\omega$. Further, $D_\omega$ is the union of
$\{D_n\}_{n=0}^\infty$ implying that $\{L_n\}_{n=0}^\infty$ admits
uniformly bilipschitz embeddings into $\{D_n\}_{n=0}^\infty$.
This, however, contradicts Theorem \ref{T:LaaksoIntoDmnds}.
\end{proof}

Corollary \ref{C:LinfNotIntoDinf} rules out the approach to the
proof of nonembeddability of $D_\omega$ into a Banach space with
the Radon-Nikod\'ym property (RNP) (\cite[Corollary 3.3]{Ost11})
by an immediate application of the corresponding result for the
Laakso space, which can be found in \cite[Corollary 1.7]{CK09} and
\cite[Theorem 3.6]{Ost11}.

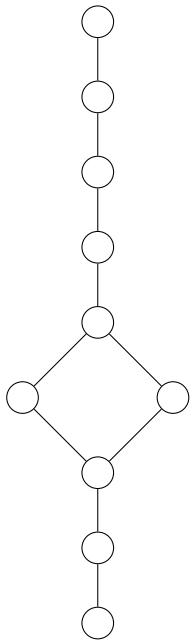
\begin{figure}
\begin{center}
{
\begin{tikzpicture}
  [scale=.25,auto=left,every node/.style={circle,draw}]
  \node (n1) at (16,0) {\hbox{~}};
  \node (n7) at (16,4)  {\hbox{~}};
  \node (n2) at (16,8)  {\hbox{~}};
  \node (n3) at (12,12) {\hbox{~}};
  \node (n4) at (20,12)  {\hbox{~}};
  \node (n5) at (16,16)  {\hbox{~}};
  \node (n6) at (16,20) {\hbox{~}};
  \node (n8) at (16,24) {\hbox{~}};
  \node (n9) at (16,28) {\hbox{~}};
  \node (n10) at (16,32) {\hbox{~}};

\foreach \from/\to in {n1/n7,n7/n2,
n2/n3,n2/n4,n4/n5,n3/n5,n5/n6,n6/n8,n8/n9,n9/n10}
    \draw (\from) -- (\to);

\end{tikzpicture}
} \caption{Laakso-type graph $M_1$.}\label{F:DmnndCompLaakso}
\end{center}
\end{figure}

However, as it can be seen from the proof above, the reason for
the non-embedda\-bi\-lity of Laakso graphs into diamond graphs is
rather specific: an incompatible choice of the parameters. One can
easily adjust Laakso graphs and get a family of Laakso-type graphs
which are isometrically embeddable into diamonds, as illustrated
by Example \ref{Ex:DmnndCompLaakso} below. An analogue of the
RNP-nonembeddability result for such Laakso-type spaces
immediately implies the non-embeddability of the infinite diamond
$D_\omega$ into Banach spaces with the RNP.

\begin{example}\label{Ex:DmnndCompLaakso} Repeat the Laakso
construction with $L_1$ replaced by the graph $M_1$ shown in
Figure \ref{F:DmnndCompLaakso}. Denote the obtained graphs
$\{M_n\}_{n=0}^\infty$. It is easy to verify that $M_1$ embeds
isometrically into $D_3$. Finally, it can be proved by induction
on $n$  that $M_n$ embeds isometrically into $D_{3n}$ for all
$n\in \mathbb{N}$.
\end{example}

Another way of proving the result simultaneously for $D_\omega$
and $L_\omega$ was found in \cite{Ost14c}. It is based on the
notion of {\it thick family of geodesics}. The result of Mendel
and Naor \cite{MN13} on the lack of Markov convexity in the Laakso
space also can be generalized  to the case of spaces with thick
families of geodesics, which includes the infinite diamond
$D_\omega$. See \cite{Ost14b} for this and related issues.

\section{Acknowledgements}

The second-named author gratefully acknowledges the support by
National Science Foundation DMS-1201269 and by Summer Support of
Research program of St. John's University during different stages
of work on this paper. The authors thank Siu Lam Leung for his
valuable comments.

\renewcommand{\refname}{\section{References}}

\end{large}

\begin{small}

\end{small}


\begin{thebibliography}{99}

\bibitem{BM08} J.\,A.~Bondy, U.\,S.\,R.~Murty, {\it Graph
theory}, Graduate Texts in Mathematics, {\bf 244}, Springer, New
York, 2008.

\bibitem{BC05} B.~Brinkman, M.~Charikar, On the impossibility
of dimension reduction in $\ell_1$. {\it  J. ACM} {\bf 52} (2005),
no. 5, 766--788.

\bibitem{CK09} J.~Cheeger, B.~Kleiner, Differentiability of Lipschitz
maps from metric measure spaces to Banach spaces with the
Radon-Nikod\'ym property, {\it  Geom. Funct. Anal.}, {\bf  19}
(2009), no. 4, 1017--1028.

\bibitem{GNRS04} A.~Gupta, I.~Newman, Y.~Rabinovich,
A.~Sinclair, Cuts, trees and $\ell_1$-embeddings of graphs, {\it
Combinatorica}, {\bf 24} (2004) 233--269; Conference version in:
{\it 40th Annual IEEE Symposium on Foundations of Computer
Science}, 1999, pp.~399--408.

\bibitem{Hei01}
J.\,M.~Heinonen, {\it Lectures on Analysis on Metric Spaces},
Universitext, Springer-Verlag, New York, 2001.

\bibitem{JS09} W.\,B.~Johnson, G. Schechtman,
Diamond graphs and super-reflexivity, {\it J. Topol. Anal.}, {\bf
1} (2009), no. 2, 177--189.

\bibitem{Laa00} T.\,J.~Laakso, Ahlfors $Q$-regular spaces with arbitrary $Q >
1$ admitting weak Poincare inequality, {\it Geom. Funct. Anal.},
{\bf 10} (2000), no. 1, 111--123.

\bibitem{LP01} U.~Lang, C.~Plaut, Bilipschitz embeddings of metric
spaces into space forms, {\it Geom. Dedicata}, {\bf 87}  (2001),
285--307.

\bibitem{LN04} J.\,R.~Lee, A.~Naor, Embedding the diamond graph in $L_p$ and dimension reduction in $L_1$. {\it Geom. Funct. Anal.} {\bf 14} (2004), no. 4,
745--747.

\bibitem{MN13}  M.~Mendel, A.~Naor, Markov convexity and local rigidity
of distorted metrics, {\it  J. Eur. Math. Soc.} (JEMS), {\bf 15}
(2013), no. 1, 287--337; Conference version: {\it Computational
geometry} (SCG'08), 49--58, ACM, New York, 2008.

\bibitem{NR03} I.~Newman, Y.~Rabinovich,
Lower bound on the distortion of embedding planar metrics into
Euclidean space, {\it Discrete Comput. Geom.} {\bf 29} (2003), no.
1, 77--81.

\bibitem{Ost11} M.\,I.~Ostrovskii, On metric characterizations of some classes of Banach
spaces, {\it  C. R. Acad. Bulgare Sci.}, {\bf 64} (2011), no. 6,
775--784.

\bibitem{Ost13} M.\,I.~Ostrovskii, Metric Embeddings: Bilipschitz and Coarse Embeddings into Banach Spaces,
de Gruyter Studies in Mathematics, {\bf 49}. Walter de Gruyter \&\
Co., Berlin, 2013.

\bibitem{Ost14a} M.\,I.~Ostrovskii,
Metric characterizations of superreflexivity in terms of word
hyperbolic groups and finite graphs, {\it Anal. Geom. Metr.
Spaces} {\bf 2} (2014), 154--168.

\bibitem{Ost14b} M.\,I.~Ostrovskii, Metric spaces nonembeddable into Banach spaces with the
Radon-Nikod\'ym property and thick families of geodesics, {\it
Fund. Math.}, {\bf 227} (2014), 85--95.

\bibitem{Ost14c} M.\,I.~Ostrovskii, Radon-Nikod\'ym property and thick families of geodesics, {\it J.
Math Anal. Appl.},  {\bf 409} (2014), no. 2, 906--910.

\bibitem{RR98} Y.~Rabinovich, R.~Raz, Lower bounds on the
distortion of embedding finite metric spaces in graphs, {\it
Discrete Comput. Geom.}, {\bf 19} (1998), no.~1, 79--94.

\end{thebibliography}
\end{document}